\theoremstyle{plain}
\newtheorem{theorem}{Theorem}[section]
\newtheorem{lemma}{Lemma}[section]
\theoremstyle{definition}
\begin{document}
\title[On symmetry of Birkhoff-James orthogonality of linear operators]{On symmetry of Birkhoff-James orthogonality of linear operators on finite-dimensional real Banach spaces} 
\author[ Debmalya Sain, Puja Ghosh \and Kallol Paul ]{Debmalya Sain, Puja Ghosh \and Kallol Paul }

\newcommand{\acr}{\newline\indent}

\address{\llap{\,}Department of Mathematics\acr
                             Jadavpur University\acr
                              Kolkata 700032\acr
                              West Bengal\acr
                              INDIA}
\email{saindebmalya@gmail.com; ghosh.puja1988@gmail.com; kalloldada@gmail.com }

\thanks{The first author feels Honoured to acknowledge the tremendous positive contribution of Prof. Kallol Paul, who happens to be a co-author of the present paper, in every sphere of his life, mathematical or otherwise! He also feels elated to acknowledge the ever helpful nature of Prof. Vladimir Kadets, despite his extremely busy schedule. } 

\subjclass[2010]{ Primary 47L05, Secondary 46B20}

\keywords{Birkhoff-James orthogonality ; Symmetry of orthogonality; Bounded linear operators; Finite dimensional Banach spaces}

\begin{abstract}
We  characterize left symmetric linear operators  on a finite dimensional strictly convex and smooth real normed linear  space $ \mathbb{X},$ which answers a question raised recently by one of the authors in \cite{S} [D. Sain, \textit{Birkhoff-James orthogonality of linear operators on finite dimensional Banach spaces, Journal of Mathematical Analysis and Applications, accepted, $ 2016 $}]. We prove that $  T\in B(\mathbb{X}) $ is left symmetric if and only if $ T $ is the zero operator. If $ \mathbb{X} $ is two-dimensional then the same characterization can be obtained without the smoothness assumption. We also explore the properties of right symmetric linear operators defined on a finite dimensional real Banach space. In particular, we prove that smooth linear operators on a finite-dimensional strictly convex and smooth real Banach space can not be right symmetric. 

\end{abstract}

\maketitle
\section{Introduction}
The principal purpose of the present paper is to answer a question raised very recently in \cite{S},  regarding Birkhoff-James orthogonality of linear operators. We also explore other related questions in order to obtain a better description of the symmetry of Birkhoff-James orthogonality of linear operators on finite-dimensional Banach spaces. Let us now briefly establish the relevant notations and terminologies. For a more detailed treatment of Birkhoff-James orthogonality, we refer the readers to the classic works \cite{B, Ja, J} and to some of the more recent works \cite{GSP, PSJ, SP, SPH}. \\

Let $(\mathbb{X}, \|.\|)$ be a normed linear space. In this paper, we would always consider $ \mathbb{X} $ to be over $ \mathbb{R}. $  For any two elements $ x , y$ in $\mathbb{X}$, $x$ is said to be orthogonal to $y$ in the sense of Birkhoff-James\cite{B, J}, written as $x\bot_B y,$ if and only if $\|x\| \leq \|x + \lambda y\|$ for all $\lambda \in \mathbb{R} $.  Birkhoff-James orthogonality is related to many important geometric properties of normed linear spaces, including strict convexity, uniform convexity and smoothness. Let $B(\mathbb{X})$ denote the Banach algebra of all bounded linear operators from $ \mathbb{X} $ to $ \mathbb{X} $. $ T \in B(\mathbb{X}) $ is said to attain norm at $ x \in S_\mathbb{X} $ if $ \| Tx\| = \| T \|. $ Let $M_T$ denote the set of all unit vectors in $S_\mathbb{X}$ at which $T$ attains norm, i.e., $M_T = \{ x \in S_\mathbb{X} \colon \|Tx\| = \|T\| \}.$ For any two elements $ T,A \in  B(\mathbb{X}), $ $ T $ is said to be orthogonal to $ A, $ in the sense of Birkhoff-James, written as $ T \perp_B A, $ if and only if 
  \[ \|T\| \leq \|T+ \lambda A\| ~\forall \lambda \in \mathbb R. \] \\
\noindent James \cite{Ja} proved that Birkhoff-James orthogonality is symmetric in a normed linear space $\mathbb{X}$ of three or more dimensions if and only if a compatible inner product can be defined on $\mathbb{X}$. Since $B(\mathbb{X})$ is not an inner product space, it is interesting to  study the symmetry of Birkhoff-James orthogonality of operators in $B(\mathbb{X})$. It is very easy to observe that in $B(\mathbb{X})$, $T\bot_B A $ may not imply $A \not\perp_B T$ or conversely. Consider T = \( \left( 
 \begin{array}{ccc}
  1 & 0 & 0 \\ 
  0 &  1/2 &  0 \\
  0 &  0 &  1/2 
  \end{array} 
  \right) \) and A = \( \left( 
 \begin{array}{ccc}
  0 & 0 & 0 \\ 
  0 &  1 &  0 \\
  0 &  0 &  0 
  \end{array} 
  \right) \) on $(\mathbb{R}^{3}, \|.\|_2).$  Then it can be shown using elementary arguments that $T\bot_B A $ but $A \not\perp_B T$.  \\ 

In \cite{S}, Sain introduced the notion of left symmetric and right symmetric points in Banach spaces, defined as follows:\\
\noindent \textbf{Left symmetric point:} An element $x\in \mathbb{X}$ is called left symmetric if $x\bot_By \Rightarrow y\bot_B x$ for all $y \in \mathbb{X}$.\\
\noindent \textbf{Right symmetric point:}  An element $x\in \mathbb{X}$ is called right symmetric if $y\bot_Bx \Rightarrow x\bot_B y$ for all $y \in \mathbb{X}$.\\
Let us say that an element $x\in \mathbb{X}$ is a symmetric point if $ x $ is both left symmetric and right symmetric. The following two notions, introduced in the same paper \cite{S}, are also relevant in context of our present work:\\
For any two elements $ x, y $ in a real normed linear space $ \mathbb{X}, $ let us say that $ y \in x^{+} $ if $ \| x + \lambda y \| \geq \| x \| $ for all $ \lambda \geq 0. $ Accordingly, we say that $ y \in x^{-} $ if $ \| x + \lambda y \| \geq \| x \| $ for all $ \lambda \leq 0. $\\

In \cite{GSP} we proved that if $\mathbb{H}$ is a real finite-dimensional Hilbert space, $T \in B(\mathbb{H})$ is right symmetric if and only if $M_T = S_{\mathbb{H}}$ and $T \in B(\mathbb{H})$ is left symmetric if and only if $T$ is the zero operator. It should be noted that if $ \mathbb{H} $ is a complex Hilbert space then Theorem $ 2.5 $ of \cite{T} gives a complete characterization of right symmetric bounded linear operators in $ B(\mathbb{H}), $ in terms of isometry and coisometry. However, these results are no longer true in general if we allow the operators to be defined on a Banach space instead of a Hilbert space. In fact, Example $ 1 $ in \cite{S} suffices to validate our remark. Sain proved in the same paper that a linear operator $T$ defined on the two-dimensional real $l_p(1< p < \infty)$ space is left symmetric if and only if $T$ is the zero operator. He also remarked in \cite{S} that it would be interesting to extend this result to higher dimensional $ l_p $ spaces, and more generally, to finite-dimensional strictly convex and smooth real Banach spaces, if possible.\\

In this paper we completely characterize left symmetric linear operators defined on a finite-dimensional strictly convex and smooth Banach space $ \mathbb{X}. $ We prove that $  T\in B(\mathbb{X}) $ is left symmetric if and only if $ T $ is the zero operator. It should be noted that if $ \mathbb{X} $ is two-dimensional then we can do away with the smoothness assumption, since only strict convexity is sufficient to obtain the desired characterization in this case. We also explore the right symmetry of Birkhoff-James orthogonality of linear operators defined on finite-dimensional Banach spaces. We show that if $ \mathbb{X} $ is a finite-dimensional strictly convex and smooth Banach space and  $ T \in B(\mathbb{X}) $ is a smooth point in $ B(\mathbb{X}) $ then $ T $ can not be right symmetric. Furthermore, when the underlying Banach space is not necessarily strictly convex or smooth, we prove two results involving right symmetric property of linear operators.

\section{Main results}

We begin this section with the promised characterization of left symmetric operator(s) defined on a two-dimensional strictly convex Banach space.
\begin{theorem}\label{two}
Let $\mathbb{X}$ be a two-dimensional strictly convex Banach space. Then $T \in B(\mathbb{X}) $ is left symmetric if and only if $T$ is the zero operator.
\end{theorem}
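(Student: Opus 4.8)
The plan is the following. The ``if'' part is immediate: if $T=0$, then $\|A\|\le\|A+\lambda T\|=\|A\|$ for every $A\in B(\mathbb{X})$ and every $\lambda\in\mathbb{R}$, so $A\perp_B 0$; thus $0$ is left symmetric. For the converse, suppose $T\in B(\mathbb{X})$ is left symmetric and $T\neq 0$; since left symmetry is scale-invariant, assume $\|T\|=1$, and fix $x_0\in M_T$ (nonempty, as $\dim\mathbb{X}<\infty$). I use two facts. (a) From \cite{S}, $T\perp_B A$ iff there exist $x,y\in M_T$ with $Ax\in(Tx)^{+}$, $Ay\in(Ty)^{-}$; in particular $Tx\perp_B Ax$ for some $x\in M_T$ forces $T\perp_B A$. (b) By strict convexity, every nonzero $f\in\mathbb{X}^{*}$ attains its norm on exactly one antipodal pair $\pm z_f$ of unit vectors (else $f$ would be norm attaining at the midpoint of two distinct norming unit vectors, a point of norm $<1$); hence a rank-one operator $x\mapsto f(x)v$ attains its norm precisely on $\{\pm z_f\}$, and $M_T=\{\pm x_0\}$. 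From (a), $M_T=\{\pm x_0\}$, and $w\perp_B v\iff v\in w^{+}\cap w^{-}$, one gets: for every $A$, $T\perp_B A$ iff $Tx_0\perp_B Ax_0$; and, symmetrically, if an operator $A$ attains its norm on a single antipodal pair $\{\pm y_A\}$ then $A\perp_B T$ iff $Ay_A\perp_B Ty_A$.

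First I would show $T$ is not invertible. Fix $v\in\mathbb{X}$, pick $g\in S_{\mathbb{X}^{*}}$ with $g(x_0)=0$, and let $A\colon x\mapsto g(x)v$. Then $(T+\lambda A)x_0=Tx_0$, so $T\perp_B A$, hence $A\perp_B T$; as $A$ attains its norm on $\{\pm z_g\}$ (independently of $v$), this says $v\perp_B Tz_g$. Since $v$ is arbitrary, $Tz_g=0$; thus $T$ is not injective, so --- being nonzero on a two-dimensional space --- $T$ has rank one with $\ker T=\langle z_0\rangle$, $z_0:=z_g$. Write $T\colon x\mapsto h(x)u$ with $\|h\|=\|u\|=1$, $\ker h=\langle z_0\rangle$, $h(x_0)=1$, so $Tx_0=u$; and unravelling that $z_0$ norms the functional $g$ annihilating $x_0$ gives $x_0\perp_B z_0$ and $z_0\perp_B x_0$. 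Moreover $u=Tx_0$ is a left symmetric point of $\mathbb{X}$: if $u\perp_B w$, the rank-one $A\colon x\mapsto h(x)w$ satisfies $Tx_0=u\perp_B w=Ax_0$, so $T\perp_B A$ and hence $A\perp_B T$, which (as $A$ attains its norm on $\{\pm x_0\}$) reduces to $w=Ax_0\perp_B u$.

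The remaining, and main, step is to build a rank-two $A$ contradicting left symmetry. Pick $v_0\in S_{\mathbb{X}}$ with $u\perp_B v_0$; by the last sentence $v_0\perp_B u$ as well, and strict convexity then makes $\mu=0$ the \emph{unique} minimizer of $\mu\mapsto\|v_0+\mu u\|$. In the basis $\{x_0,z_0\}$ define $A$ by $Ax_0=c\,v_0$, $Az_0=u+d\,v_0$, with $c\neq0$, $d\neq0$ to be chosen. Since $Tx_0=u\perp_B v_0$, we have $Tx_0\perp_B Ax_0$, so $T\perp_B A$. Now suppose $A$ attains its norm on $\{\pm y_A\}$, $y_A=a x_0+b z_0$. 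Then $Ty_A=au$ and $Ay_A=(ac+bd)v_0+b\,u$, and the uniqueness of the minimizer above shows $Ay_A\perp_B u$ can hold only if $b=0$; hence, by the equivalence from the first paragraph, $A\perp_B T$ fails whenever $a\neq0$ and $b\neq0$, i.e. whenever the norm attainment of $A$ avoids $\pm x_0$ and $\pm z_0$. It remains to choose $c,d$ (and, if necessary, $v_0$) so that $A$ attains its norm on a single pair $\{\pm y_A\}$ with $y_A\notin\langle x_0\rangle\cup\langle z_0\rangle$: enlarging $|c|$ drives the norm attainment toward $\pm x_0$ and enlarging $|d|$ drives it toward $\pm z_0$, so an intermediate choice lands it at a genuinely mixed unit vector. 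Making this precise --- showing the norm attainment set is a single antipodal pair and is not pinned at a coordinate vector, in a space only assumed strictly convex (not necessarily smooth) --- is the main obstacle. Granting it, this $A$ satisfies $T\perp_B A$ but $A\not\perp_B T$, contradicting left symmetry of $T$; therefore $T=0$.
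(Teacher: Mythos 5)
Your reduction to the rank-one case is sound and in fact runs parallel to the paper's: your argument that $Tz_0=0$ reproves (in dimension two) Theorem $2.5$ of \cite{S}, which the paper simply cites; the mutual orthogonality $x_0\perp_B z_0$, $z_0\perp_B x_0$ and the left symmetry of $u=Tx_0$ (Theorem $2.4$ of \cite{S}) appear in both arguments; and your perturbed operator $Ax_0=c\,v_0$, $Az_0=u+d\,v_0$ is, up to rescaling and relabeling, the same two-parameter family the paper uses ($Ax_1=u$, $Ax_2=t_0u+(1-t_0)Tx_1$, with your $v_0$ playing the role of the paper's $u$). The genuine gap is the step you yourself flag as ``the main obstacle'': you never verify $A\not\perp_B T$. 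Your verification scheme needs $M_A$ to be a single antipodal pair $\{\pm y_A\}$ with $y_A\notin\langle x_0\rangle\cup\langle z_0\rangle$, and neither half of that is available: in a merely strictly convex (non-smooth) plane the norm attainment set of an operator need not be an antipodal pair, and even if you arranged $M_A\cap(\langle x_0\rangle\cup\langle z_0\rangle)=\emptyset$, the implication ``$Ay\not\perp_B Ty$ for all $y\in M_A$, hence $A\not\perp_B T$'' is false for non-singleton $M_A$: by Theorem $2.2$ of \cite{S}, $A\perp_B T$ can be witnessed by two \emph{different} points $y_1,y_2\in M_A$ with $Ty_1\in(Ay_1)^{+}\setminus(Ay_1)^{-}$ and $Ty_2\in(Ay_2)^{-}\setminus(Ay_2)^{+}$, with no single $y$ giving $Ay\perp_B Ty$. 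So the argument as written does not close.

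The paper gets around precisely this difficulty by a quantitative choice of parameters combined with a one-sided criterion. Writing $\|x_1+x_2\|=2-\delta$ (strict convexity gives $0<\delta<1$) and taking $v=t_0u+(1-t_0)Tx_1$ with $\|v-u\|<\epsilon<\frac{\delta}{3-\delta}$, one gets $\|A\|\geq \frac{\|A(x_1+x_2)\|}{\|x_1+x_2\|}>\frac{2-\epsilon}{2-\delta}>1+\epsilon$, while $\|A(-\alpha_1x_1+\alpha_2x_2)\|\leq|\alpha_2-\alpha_1|+|\alpha_2|\,\epsilon<1+\epsilon$ for unit vectors with $\alpha_1,\alpha_2>0$. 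This confines $M_A$ --- whatever its cardinality --- to same-sign combinations $\alpha_1x_1+\alpha_2x_2$ with $\alpha_1\alpha_2\neq0$, and for those an explicit computation shows $Tz\notin(Az)^{-}$; Theorem $2.2$ of \cite{S} then yields $A\not\perp_B T$ without ever needing $M_A$ to be a pair. To salvage your version you would need to replace ``$Ay_A\perp_B Ty_A$ fails at the unique norming pair'' by such a one-sided statement valid on all of $M_A$, and to replace the qualitative ``intermediate choice of $c,d$'' by an explicit estimate of the above type.
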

\begin{proof}
If possible suppose that $T$ is a non-zero left symmetric operator. Since $ \mathbb{X} $ is finite-dimensional, there exists $x_1 \in S_X$ such that $\|Tx_1\| = \|T\|$.\\
It follows from $ \mbox{Theorem}~ 2.3 $ of James \cite{J} that there exists $x_2 \in S_X$ such that $x_2 \perp_B x_1$. Furthermore, it follows from Theorem 2.5 of Sain \cite{S} that $Tx_2 = 0$.\\
We next claim that $ x_1 \perp_B x_2. $ \\
Once again, it follows from $ \mbox{Theorem}~ 2.3 $ of James \cite{J} that there exists a real number $ a $ such that $ ax_2 + x_1 \perp_B x_2$. Since $ x_1 \perp_B x_2 $ and $ x_1, x_2 \neq 0, $ $ \{ x_1, x_2 \} $ is linearly independent and hence $ ax_2 + x_1 \neq 0. $ Let $ z = \frac{ax_2 + x_1}{\| ax_2 + x_1 \|}. $ We note that if $ Tz = 0 $ then $ T $ is the zero operator. Let $ Tz \neq 0. $ Clearly, $ \{ x_2, z \} $ is a basis of $ \mathbb{X}, $ since $ \mathbb{X} $ is two-dimensional.\\
Let $ \| c_1 z + c_2 x_2 \| = 1, $ for some scalars $ c_1, c_2. $ Then we have, $ 1 = \| c_1 z + c_2 x_2 \| \geq ~\mid c_1 \mid. $ Since $ \mathbb{X} $ is strictly convex, $ 1 > \mid c_1 \mid, $ if $ c_2 \neq 0. $ We also have, $ \| T(c_1 z + c_2 x_2) \| = \| c_1 Tz \| = \mid c_1 \mid \| Tz \| \leq \| Tz \| $ and $ \| T(c_1 z + c_2 x_2) \| = \| Tz \| $ if and only if $ c_1 = \pm 1 $ and $ c_2 = 0. $ This proves that $ M_T = \{ \pm z \}. $ However, we have already assumed that $ x_1 \in M_T. $ Thus, we must have $ x_1 = \pm z. $ Since $ z \perp_B x_2, $ our claim is proved. Thus, $ x_1, x_2 \in S_{\mathbb{X}} $ are such that $ x_1 \perp_B x_2 $ and $ x_2 \perp_B x_1. $ \\
Let $u \in S_X$ such that $Tx_1 \perp_B u$.
By Theorem 2.4 of Sain \cite{S}, $Tx_1$ is a left symmetric point in $ \mathbb{X} $ and so $u \perp_B Tx_1$. By strict convexity of $\mathbb{X}$, we must have, $\|x_1 + x_2\| = 2-\delta$ for some $0 < \delta <1 $.\\
Choose $0<\epsilon< \frac{\delta}{3-\delta} $.\\
Let $v \in B(u,\epsilon)$ be such that $v = t_0 u + (1-t_0) Tx_1$, for some $t_0 \in (0,1)$. We may and do note that such a choice of $ v $ is always possible.\\
Define a linear operator $A$ as follows:
\begin{eqnarray*}
Ax_1 & = & u \\
Ax_2 & = & v 
\end{eqnarray*}
It is easy to verify that $T\perp_B A$, as $ x_1 \in M_{T} $ and $Tx_1 \perp_B Ax_1$.\\
Now, by virtue of our choice of $ \epsilon, $ we have,
\[ A\Big(\frac{x_1+x_2}{\|x_1+x_2\|}\Big) = \frac{\|u+v\|}{\|x_1+x_2\|} > \frac{2-\epsilon}{2-\delta} > 1+\epsilon \]
Since $\|A\| > 1 ,$ $x_1, x_2 \notin M_A$.
Let $z = -\alpha_1 x_1 + \alpha_2 x_2 \in S_{\mathbb{X}}$ be chosen arbitrarily, where $\alpha_1, \alpha_2 > 0$. Since $ \mathbb{X} $ is strictly convex, $ x_1 \perp_{B} x_2, $ $ x_2 \perp_{B} x_1, $ and $ z \in S_{\mathbb{X}}, $ it can be easily verified that $\alpha_1, \alpha_2 < 1$.\\
Now 
\[\|Az\| = \|(\alpha_2 - \alpha_1)u + \alpha_2 (v-u)\| < |\alpha_2 - \alpha_1|+|\alpha_2|\|v-u\| < 1+\epsilon < A\Big(\frac{x_1+x_2}{\|x_1+x_2\|}\Big)\]
 and so $z = (-\alpha_1 x_1 + \alpha_2 x_2 ) \notin M_A, $ where $\alpha_1, \alpha_2 > 0$ and $ z \in S_{\mathbb{X}} $.\\
By taking the symmetry of $ S_{\mathbb{X}} $ about the origin into consideration, this effectively proves the following: `` Let $z \in M_A$. Then $ z $ must be of the form $z = \alpha_1 x_1 + \alpha_2 x_2 $, where $\alpha_1, \alpha_2 $ are of same sign.''\\
 We further note that $ \alpha_1, \alpha_2 \neq 0, $ since $ x_1, x_2 \notin M_A. $\\
Let us first assume that $\alpha_1, \alpha_2 > 0 .$ We have, $Az = \alpha_1 u + \alpha_2 v $ and $ Tz= \alpha_1 Tx_1. $ \\
We claim that $Tz \notin (Az)^-$.  \\
From Proposition 2.1 of Sain \cite{S}, it is easy to observe that it is sufficient to show: $Tx_1 \notin (\alpha_1 u+\alpha_2 v)^-$.\\
Now, 
\[\alpha_1 u+\alpha_2 v = \alpha_1 u + \alpha_2 (t_0 u+(1-t_0)Tx_1) = (\alpha_1 + \alpha_2 t_0)u + \alpha_2(1-t_0) Tx_1 .\]
So, 
\[ \|\alpha_1 u+\alpha_2 v - \alpha_2(1-t_0) Tx_1\| = \|(\alpha_1 + \alpha_2 t_0)u\| < \|(\alpha_1 + \alpha_2 t_0)u + \alpha_2(1-t_0) Tx_1\| = \|\alpha_1 u+\alpha_1 v\| \]
$\Rightarrow Tx_1 \notin (\alpha_1 u+\alpha_2 v)^-$, as claimed.\\
Similarly, if $\alpha_1, \alpha_2 < 0 $, we can show that $Tz \notin (Az)^-$.\\
Since for all $z \in M_A$, $Tz \notin (Az)^-$, using Theorem 2.2 of Sain \cite{S}, we conclude that $A \not\perp_B T$, which contradicts our initial assumption that $T$ is a non-zero left symmetric operator.

\end{proof}

For the corresponding result on higher dimensional Banach spaces, we first need the following lemma. We would also like to remark this gives an alternative proof to the last part of Theorem $ 2.2 $ in \cite{Sa}.

\begin{lemma}
Let $ \mathbb{X} $ be a Banach space, $ T \in B(\mathbb{X}) $ and $ x \in M_T .$ If in addition, both $x$ and $ Tx $ are smooth points in $ \mathbb{X} $ then for any $ y \in \mathbb{X}, $ we have, $ x \perp_{B} y \Rightarrow Tx \perp_{B} Ty. $
\end{lemma}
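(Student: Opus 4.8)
The plan is to pass from Birkhoff--James orthogonality to the language of norming functionals, where smoothness becomes a uniqueness statement. Recall James's characterization: for $u,v\in\mathbb{X}$, we have $u\perp_B v$ if and only if there is some $f\in S_{\mathbb{X}^{*}}$ with $f(u)=\|u\|$ and $f(v)=0$; and when $u$ is a smooth point this norming functional at $u$ is unique, call it $f_u$, so that $u\perp_B v\iff f_u(v)=0$. In particular, smoothness of $x$ turns the hypothesis $x\perp_B y$ into the single equation $f_x(y)=0$.

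The heart of the matter is the following. We may assume $T\neq 0$, the zero case being trivial; then $x\in M_T$ gives $Tx\neq 0$ and $\|Tx\|=\|T\|$, so it makes sense to speak of $Tx$ as a (nonzero) point. Pick any norming functional $g\in S_{\mathbb{X}^{*}}$ at $Tx$, so $g(Tx)=\|Tx\|=\|T\|$, and look at $g\circ T\in\mathbb{X}^{*}$. Then $\|g\circ T\|\le\|g\|\,\|T\|=\|T\|$, while $(g\circ T)(x)=g(Tx)=\|T\|=\|T\|\,\|x\|$; hence $\|g\circ T\|=\|T\|$ and $\tfrac{1}{\|T\|}(g\circ T)$ is a norming functional at $x$. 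Since $x$ is smooth, uniqueness forces $g\circ T=\|T\|f_x$. Consequently $g(Ty)=(g\circ T)(y)=\|T\|f_x(y)=0$, where we used $x\perp_B y$. Since $g$ was an arbitrary norming functional at $Tx$, James's characterization now yields $Tx\perp_B Ty$.

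There is no genuine obstacle here; the points that need care are the degenerate case $T=0$, the normalization verifying that $\tfrac{1}{\|T\|}(g\circ T)$ really norms $x$, and --- most importantly --- the observation that it is smoothness of the \emph{domain} point $x$ that collapses $g\circ T$ onto $\|T\|f_x$ and thereby pins down $g(Ty)$. Smoothness of $Tx$ is used only to state the conclusion intrinsically through the unique functional $f_{Tx}$; in fact, since the computation above shows that \emph{every} norming functional at $Tx$ annihilates $Ty$, the smoothness hypothesis at $Tx$ is not essential to the argument.
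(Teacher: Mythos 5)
Your proof is correct and follows essentially the same route as the paper's: compose a norming functional $g$ of $Tx$ with $T$, verify that $\tfrac{1}{\|T\|}(g\circ T)$ norms $x$, and invoke smoothness of $x$ to identify it with $f_x$, so that $x\perp_B y$ forces $g(Ty)=0$ and hence $Tx\perp_B Ty$. Your closing observation that smoothness of $Tx$ is not actually needed is also correct, since James's criterion only requires one norming functional of $Tx$ to annihilate $Ty$; the paper nevertheless states the lemma with both smoothness hypotheses.
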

\begin{proof}
Without any loss of generality we can assume that $\|T\| = 1$. Since $x$ is a smooth point, there exists a unique linear functional $f \in S_{\mathbb{X}^*}$ such that $f(x) = \|x\| = 1. $\\
Again since $Tx$ is a smooth point, there exists a unique linear functional $g \in S_{\mathbb{X}^*}$ such that $g(Tx) = \|Tx\| = \|T\|\|x\| = 1$.\\
Now $g\circ T$ is a linear functional on $\mathbb{X}$ and $\|g \circ T\| \leq \|g\|\|T\| = \|T\| = 1.$ So $\|g \circ T\| = 1.$ From the uniqueness of $f$ we get, $f = g \circ T .$\\
As $x \perp_B y$, we have $f(y) = 0, $ i.e., $g(Ty) = 0$. However, this is equivalent to $Tx \perp_B Ty,$ which completes the proof of the lemma. \\

\end{proof}

When the dimension of $ \mathbb{X} $ is strictly greater than $ 2, $ we have the following theorem regarding left symmetric linear operator(s) in $ B(\mathbb{X}). $ In this case we need the additional assumption of smoothness on $ \mathbb{X}. $ 

\begin{theorem}
Let $\mathbb{X}$ be an $n-$dimensional strictly convex and smooth Banach space. $T \in B(\mathbb{X}) $ is left symmetric if and only if $T$ is the zero operator.
\end{theorem}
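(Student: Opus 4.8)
The plan is to prove the non-trivial implication by contraposition, adapting the construction in the proof of Theorem~\ref{two} to the full $n$-dimensional space; the smoothness hypothesis and the Lemma above are precisely what make this adaptation go through. Throughout, for a unit vector $w$ let $f_w\in S_{\mathbb{X}^*}$ denote its norming functional, which is unique since $\mathbb{X}$ is smooth. Suppose then that $T\in B(\mathbb{X})$ is left symmetric with $T\neq 0$; normalize $\|T\|=1$ and fix $x_1\in M_T$. By Theorem~$2.3$ of James~\cite{J} one may choose $y_2,\dots,y_n$ with $y_i\perp_B x_1$ so that $\{x_1,y_2,\dots,y_n\}$ is a basis of $\mathbb{X}$; by Theorem~$2.5$ of Sain~\cite{S} each $Ty_i=0$, hence $\dim\ker T\geq n-1$, and since $T\neq 0$ it follows that $T$ has rank one with $\operatorname{range}T=\operatorname{span}\{u_0\}$, where $u_0:=Tx_1\in S_{\mathbb{X}}$. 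The Lemma applied at $x_1$ gives $Tx_1\perp_B Ty$ whenever $x_1\perp_B y$, i.e. $T(\ker f_{x_1})\subseteq\ker f_{u_0}$; since $u_0\notin\ker f_{u_0}$ and $\operatorname{range}T=\operatorname{span}\{u_0\}$, this forces $T(\ker f_{x_1})=\{0\}$, so $\ker T=\ker f_{x_1}$ and $Tx=f_{x_1}(x)u_0$ for all $x$. Finally, by Theorem~$2.4$ of Sain~\cite{S}, $u_0$ is a left symmetric point of $\mathbb{X}$.

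Next I construct the auxiliary operator. Put $x_2:=y_2/\|y_2\|$, so that $x_2\in S_{\mathbb{X}}$, $x_2\perp_B x_1$, $x_2\in\ker f_{x_1}$ (hence $x_1\perp_B x_2$) and $Tx_2=0$. Let $Y:=\ker f_{x_1}\cap\ker f_{x_2}$, an $(n-2)$-dimensional subspace with $\mathbb{X}=\operatorname{span}\{x_1\}\oplus\operatorname{span}\{x_2\}\oplus Y$, and note $Y\subseteq\ker T$. Choose $u\in S_{\mathbb{X}}$ with $u_0\perp_B u$; left symmetry of $u_0$ gives $u\perp_B u_0$, and strict convexity then yields $\|cu+su_0\|>|c|$ whenever $s\neq 0$. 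Writing $\|x_1+x_2\|=2-\delta$ (so $\delta\in(0,1)$ by strict convexity), fix $\epsilon\in(0,\tfrac{\delta}{3-\delta})$ and then $t_0\in(0,1)$ close enough to $1$ that $v:=t_0u+(1-t_0)u_0\in B(u,\epsilon)$, and define $A\in B(\mathbb{X})$ by $Ax_1=u$, $Ax_2=v$, $A|_Y=0$. Since $\|T+\lambda A\|\geq\|(T+\lambda A)x_1\|=\|u_0+\lambda u\|\geq 1=\|T\|$ for every $\lambda$, we obtain $T\perp_B A$.

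The final and most delicate part is to check $A\not\perp_B T$, for which, by Theorem~$2.2$ of Sain~\cite{S}, it suffices to show $Tz\notin(Az)^-$ for every $z\in M_A$. First, $\|A\|\geq\|u+v\|/\|x_1+x_2\|>(2-\epsilon)/(2-\delta)>1+\epsilon$. For a unit vector $z=\alpha_1x_1+\alpha_2x_2+y$ with $y\in Y$, one has $f_{x_1}(z)=\alpha_1$ and $f_{x_2}(z)=\alpha_2$ (using $x_1\perp_B x_2$, $x_2\perp_B x_1$ and $Y\subseteq\ker f_{x_1}\cap\ker f_{x_2}$), so $\|z\|\geq|\alpha_1|$ and $\|z\|\geq|\alpha_2|$, the first strict unless $z=\pm x_1$ and the second strict unless $z=\pm x_2$, by strict convexity. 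Combining these with $\|v\|\leq 1$ and $\|v-u\|<\epsilon$ gives $\|Az\|<\|A\|$ whenever $\alpha_1=0$, or $\alpha_2=0$, or $\alpha_1\alpha_2<0$. Hence any $z\in M_A$ has $\alpha_1,\alpha_2\neq 0$ of the same sign; taking $\alpha_1,\alpha_2>0$ (the case where both are negative being handled identically), $Tz=\alpha_1u_0$ while $Az=(\alpha_1+\alpha_2t_0)u+\alpha_2(1-t_0)u_0$, and with $\lambda:=-\alpha_2(1-t_0)/\alpha_1\leq 0$ we get $\|Az+\lambda(\alpha_1u_0)\|=|\alpha_1+\alpha_2t_0|<\|Az\|$ by strict convexity and $u\perp_B u_0$; thus $Tz\notin(Az)^-$. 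Therefore $A\not\perp_B T$, contradicting the left symmetry of $T$, and so $T=0$.

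The step I expect to be the main obstacle is the interplay of this structural reduction with the choice of $Y$. One needs $\operatorname{rank}T=1$ and $\ker T=\ker f_{x_1}$ — which is where the Lemma and the uniqueness of norming functionals genuinely enter — in order for the identity $Tz=\alpha_1u_0$ to hold; and one needs $Y$ chosen so that $f_{x_1}$ and $f_{x_2}$ vanish on $\operatorname{span}\{x_2\}\oplus Y$ and on $\operatorname{span}\{x_1\}\oplus Y$ respectively, which is exactly what lets the two-dimensional estimates bounding $M_A$ away from the ``bad'' directions in the proof of Theorem~\ref{two} carry over verbatim to $n$ dimensions.
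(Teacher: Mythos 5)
Your proof is correct, and while the endgame (the operator $A$ with $Ax_1=u$, $Ax_2=v$, the $\epsilon$--$\delta$ estimates, the analysis of $M_A$, and the appeal to Theorem~$2.2$ of Sain \cite{S}) coincides with the paper's, your route to the structural description of $T$ is genuinely different and shorter. The paper devotes roughly half of its argument to proving that $x_1\in M_T$ must be both right symmetric and left symmetric, precisely so that it can conclude $H_1\perp_B x_1$ for the hyperplane $H_1$ with $x_1\perp_B H_1$ and then invoke Theorem~$2.5$ of Sain to get $T(H_1)=0$; it then needs Theorem~$4.2$ of James to arrange $x_2\perp_B \operatorname{span}\{x_1,x_3,\dots,x_n\}$. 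You instead observe that Theorem~$2.5$ of Sain, applied to $n-1$ linearly independent vectors Birkhoff--James orthogonal to $x_1$ (supplied by James's Theorem~$2.3$), already forces $\operatorname{rank} T=1$, and then the paper's Lemma pins down $\ker T=\ker f_{x_1}$, yielding the explicit representation $Tx=f_{x_1}(x)u_0$; the decomposition $\mathbb{X}=\operatorname{span}\{x_1\}\oplus\operatorname{span}\{x_2\}\oplus(\ker f_{x_1}\cap\ker f_{x_2})$ then makes the bounds $|\alpha_1|,|\alpha_2|<1$ and the identity $Tz=\alpha_1 u_0$ transparent. Your version buys brevity and an explicit rank-one formula for $T$ (and supplies the strict-convexity justification for the final inequality $\|Az+\lambda Tz\|<\|Az\|$ more carefully than the paper does); the paper's longer detour buys the intermediate fact, of some independent interest, that any norm-attaining unit vector of a nonzero left symmetric operator would have to be a symmetric point of $\mathbb{X}$.
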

\begin{proof}
If possible suppose that $T$ is a non-zero left symmetric operator. Since $ \mathbb{X} $ is finite-dimensional, there exists $x_1 \in S_X$ such that $\|Tx_1\| = \|T\|$.\\
We first claim that $x_1$ is right symmetric.\\
If possible suppose that $x_1$ is not right symmetric, i.e., there exists $y \in S_{\mathbb{X}}$ such that $y \perp_B x_1$, but $x_1 \not\perp_B y$.\\
Let $H$ be the hyperplane of codimension 1 such that $ y\perp_B H $. Then any $w \in \mathbb{X}$ can be written as $w = ay + h$ for some scalar $a$ and $ h \in H .$ Define a linear operator $A$ on $\mathbb{X}$ such that $Aw = aTx_1 $. Clearly, $M_A = \pm\{y\}$. Since $\mathbb{X}$ is smooth and $y\perp_B x_1,$ we have $x_1 \in H $, so $Ax_1 = 0$, from which it follows that $Tx_1 \perp_B Ax_1$. As $x_1 \in M_T$, $T\perp_B A$. 
Now $x_1 \not\perp_B y,$ and $x_1 \in M_T$, so by Proposition $ 2.1 $ of Sain \cite{Sa}, we get $Tx_1 \not\perp_B Ty$, i.e., $Ay \not\perp_B Ty.$ Since $M_A = \pm \{y\}$, by Theorem $2.1$ of Sain \cite{SP}, it follows that $A\not\perp_B T$, which contradicts that $T$ is left symmetric. Hence $x_1$ must be right symmetric.\\
We next claim that $x_1$ is left symmetric.\\
If possible suppose that $x_1$ is not left symmetric, i.e., there exists $z \in S_{\mathbb{X}}$ such that $x_1 \perp_B z$, but $z \not\perp_B x_1$.\\
We now prove that $Tz = 0$.\\
If possible suppose that $Tz \neq 0$. Let $H_z$ be the hyperplane of codimension 1 such that $ z\perp_B H_z $. Now, any $w \in \mathbb{X}$ can be written as $w = az + h$ for some scalar a and $ h \in H_z .$ Define a linear operator $A$ on $\mathbb{X}$ such that $Aw = aTz $. Since $ \mathbb{X} $ is strictly convex, $M_A = \pm\{z\}$. As $Az \not\perp_B Tz$, applying Theorem $ 2.1 $ of \cite{SP}, we conclude that $A \not\perp_B T$. Since $\mathbb{X}$ is smooth and $x_1 \perp_B z,$ applying Lemma $2.1$ we get, $Tx_1 \perp_B Tz $. It is easy to check that $Ax_1 = Tz$. So $Tx_1 \perp_B Ax_1$. Since $x_1 \in M_T$, we have, $T\perp_B A$. Thus we have, $ T\perp_B A $ but $A \not\perp_B T,$ which contradicts our assumption that $ T $ is left symmetric. This completes the proof of our claim.\\
Now, from Theorem 2.3 of James \cite{J}, it follows that there exists a scalar $k$ such that $kx_1+z \perp_B x_1$. As $T$ is left symmetric, by Theorem $2.5$ of Sain \cite{S}, we get $T(kx_1+z) = 0$. Since $ Tz=0 $ and $ Tx_1 \neq 0, $ it now follows that $k = 0$. So $z\perp_B x_1$, a contradiction to our choice of $ z. $ Therefore $x_1$ is left symmetric. Thus, combining these two observations, we conclude that $ x_1 $ is a  symmetric point in $ \mathbb{X}. $\\
Let $H_1$ be the subspace of codimension one such that $x_1\perp_B H_1$. Since $x_1$ is symmetric, $H_1 \perp_B x_1$ and by Theorem 2.5 of Sain \cite{S}, $T(H_1) = 0. $ Suppose that $\{x_2, x_3, \ldots , x_{n}\}$ is a basis of $H_1$ such that $x_2\perp_B$ span$\{x_3, x_4, \ldots ,x_{n}\}$. Since $\mathbb{X}$ is smooth, using Theorem 4.2 of \cite{J}, we conclude that $x_2 \perp_B $ span$\{x_1, x_3, \ldots ,x_{n}\}$.\\
Let $u \in S_{\mathbb{X}}$ such that $Tx_1 \perp_B u$.
By Theorem $2.4$ of Sain \cite{S}, $Tx_1$ is left symmetric and so $u \perp_B Tx_1$. As in the proof of Theorem $ 2.1, $ by strict convexity of $\mathbb{X}$, $\|x_1 + x_2\| = 2-\delta$ for some $0 < \delta <1 $.\\
As before, choose $0<\epsilon< \frac{\delta}{3-\delta} $ and let $v \in B(u,\epsilon)$ such that $v = t_0 u + (1-t_0) Tx_1$, for some $t_0 \in (0,1)$.\\
Define a linear operator $A$ on $ \mathbb{X} $ in the following way:
\begin{eqnarray*}
Ax_1 & = & u \\
Ax_2 & = & v \\
Ax_i & = & 0, n \geq i \geq 3
\end{eqnarray*}
It is easy to verify that $T\perp_B A$, as $x_1 \in M_T$ and $Tx_1 \perp_B Ax_1$.\\
Now following the same arguments as in the Theorem \ref{two}, we get,
\[ A\Big(\frac{x_1+x_2}{\|x_1+x_2\|}\Big)  
> 1+\epsilon \]
Since $\|A\| > 1 ,$ $x_i \notin M_A$ for all $n \geq i \geq 1$.
Let $z = -\alpha_1 x_1 + \alpha_2 x_2 + \ldots + \alpha_{n} x_{n} \in S_{\mathbb{X}}$, where $\alpha_1, \alpha_2 > 0$. Since $ \mathbb{X} $ is strictly convex, $x_1 \perp_B $ span$\{x_2, x_3, \ldots ,x_{n}\} ,$ $x_2 \perp_B $ span$\{x_1, x_3, \ldots ,x_{n}\} ,$ and $ z \in S_{\mathbb{X}}, $ it can be easily verified that $\alpha_1, \alpha_2 < 1$.\\
Since 
\[\|Az\| = \|(\alpha_2 - \alpha_1)u + \alpha_2 (v-u)\| < |\alpha_2 - \alpha_1|+|\alpha_2|\|v-u\| < 1+\epsilon < A\Big(\frac{x_1+x_2}{\|x_1+x_2\|}\Big)\], we may conclude that,
$z = (-\alpha_1 x_1 + \alpha_2 x_2 + \ldots + \alpha_{n} x_{n}) \notin M_A $, where $\alpha_1, \alpha_2 > 0$ and $ z \in S_{\mathbb{X}}. $\\
Similar to the proof of Theorem $ 2.1, $ we thus have the following conclusion:\\
Let $z \in M_A$. Then $ z $ must be of the form $z = (\alpha_1 x_1 + \alpha_2 x_2+ \ldots + \alpha_{n} x_{n}) $, where $\alpha_1, \alpha_2 $ are of same sign. We further note that $ \alpha_1, \alpha_2 \neq 0, $ since $ x_1, x_2 \notin M_A. $\\ 
Next we may proceed in the same way as in Theorem \ref{two}, to show that $Tz \notin (Az)^-$. This, along with Theorem 2.2 of Sain \cite{S}, lead to the conclusion that $A\not\perp_B T .$ This proves that $ T $ is not a left symmetric point in $ B(\mathbb{X}) $ and completes the proof of the theorem.  \\

\end{proof}

In the next theorem we prove that smooth linear operators defined on a finite-dimensional strictly convex and smooth Banach space can not be right symmetric.

\begin{theorem}
Let $\mathbb{X}$ be a finite-dimensional strictly convex and smooth Banach space. Let $T \in B(\mathbb{X}) $ be smooth. Then $T$ is not right symmetric.
\end{theorem}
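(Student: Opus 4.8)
The plan is to argue by contradiction: assuming $T$ is a smooth right symmetric operator, I will exhibit $A \in B(\mathbb{X})$ with $A \perp_B T$ but $T \not\perp_B A$. (One may assume $\dim\mathbb{X}\ge 2$.) The first step is to exploit smoothness of $T$. By the known description of smooth points of $B(\mathbb{X})$ over a finite-dimensional strictly convex space, smoothness of $T$ forces the norm attainment set to be minimal: $M_T=\{\pm x_0\}$ for a single $x_0\in S_{\mathbb{X}}$, and $Tx_0$ is a smooth point of $\mathbb{X}$. Normalise $\|T\|=1$, so that $\|Tx_0\|=1$, and let $g_0\in S_{\mathbb{X}^*}$ be the unique supporting functional at $Tx_0$. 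Since $M_T$ is a single antipodal pair, Theorem~$2.1$ of \cite{SP} yields $T\perp_B A \iff Tx_0\perp_B Ax_0$, and smoothness of $Tx_0$ rewrites this as $g_0(Ax_0)=0$. Hence it suffices to construct an operator $A$ with $A\perp_B T$ and $g_0(Ax_0)\neq 0$.

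For the generic case, recall that since $\mathbb{X}$ is finite-dimensional, strictly convex and smooth, the duality map $S_{\mathbb{X}}\to S_{\mathbb{X}^*}$ is a homeomorphism, so for every $v\in S_{\mathbb{X}}$ the set $\{w\in S_{\mathbb{X}}: w\perp_B v\}$ is closed and nowhere dense. I would choose a unit vector $y$ with $y\not\perp_B x_0$ together with $z\in S_{\mathbb{X}}$ such that $z\perp_B Ty$ and $g_0(z)\neq 0$ (if $Ty=0$ one may take $z=Tx_0$; in general such $z$ exists provided $Ty$ lies outside a nowhere-dense exceptional set determined by the hyperplane $\ker g_0$), and set $Aw:=f_y(w)\,z$ with $f_y$ the supporting functional at $y$. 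Strict convexity gives $M_A=\{\pm y\}$, so $Ay=z\perp_B Ty$ gives $A\perp_B T$, while $Ax_0=f_y(x_0)\,z$ with $f_y(x_0)\neq 0$ and $g_0(z)\neq 0$ gives $g_0(Ax_0)\neq 0$. A dimension count shows such a $y$ exists --- so this rank-one operator already finishes the proof --- in all cases except when $\operatorname{rank}T=1$ \emph{and} $x_0$ is a symmetric point of $\mathbb{X}$ (in particular it covers $\operatorname{rank}T\ge 2$, and also $\operatorname{rank}T=1$ with $x_0$ not symmetric, where $\ker T=\ker f_0$ is a hyperplane containing vectors $y$ with $y\not\perp_B x_0$).

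It remains to treat the degenerate case $\operatorname{rank}T=1$ with $x_0$ symmetric, which covers in particular the Hilbert space situation and is, I expect, the technical heart of the theorem. Write $Tw=f_0(w)\,b$ with $b=Tx_0$. If $Tx_0\not\perp_B x_0$, we are done by taking $A=I$: since $\dim\mathbb{X}\ge 2$, $I$ attains its norm on the hyperplane $\ker f_0=\ker T$, so $I\perp_B T$, whereas $T\not\perp_B I$ because $Tx_0\not\perp_B Ix_0=x_0$. So assume $b\perp_B x_0$; then symmetry of $x_0$ gives $x_0\perp_B b$ and $b\in\ker f_0=:H=\ker T$, so $T|_H=0$ and $T^2=0$. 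A rank-one operator of the previous form is no longer adequate, and one needs $A$ of rank at least $2$. In the spirit of the construction in the proof of Theorem~\ref{two}, I would fix $y\in S_H$, use James's theorem inside $H$ together with smoothness of $\mathbb{X}$ (so that Birkhoff--James orthogonality becomes additive) to write $\mathbb{X}=\mathbb{R}y\oplus N$ with $y\perp_B N$ and $x_0\in N$, and define $A$ by $Ay=R\,\xi$ for a large scalar $R$ and a suitably chosen $\xi$, $Ax_0=b$, and $A=0$ on a complement of $\operatorname{span}\{x_0\}$ inside $N$; the choices are to be made so that $M_A\subseteq H=\ker T$ --- which makes $A\perp_B T$ automatic --- while $g_0(Ax_0)=g_0(b)=1\neq 0$ gives $T\not\perp_B A$, the required contradiction.

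The main obstacle --- the real work --- is to show that, for $R$ large enough, $A$ attains its norm only on $H$. This amounts to estimating $\|Aw\|$ for $w=\alpha y+v$ with $v\in N$, using $y\perp_B v$ (whence $\|w\|\ge|\alpha|$, with equality only at $w=\pm y$, so that $w$ near $\pm y$ forces $v$ small), and then showing that displacing $w$ off $H$ strictly decreases $\|Aw\|$. Here the finite-dimensionality of $\mathbb{X}$ --- hence uniform convexity and uniform smoothness --- is essential: lacking an exact orthogonal (Pythagorean) decomposition, one has to weigh the moduli of convexity and smoothness of $\mathbb{X}$ at the points $y$, $x_0$ and $b$ against one another, the key point being that the first-order directional derivatives of the norm arising from the relations $y\perp_B v$, $x_0\perp_B H$ and $b\perp_B x_0$ all vanish, so that the second-order cost of leaving $H$ dominates the gain once $R$ is large. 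The reduction in the first paragraph and the generic construction in the second should be routine once that reformulation is in place; it is this norm-attainment estimate that I expect to require the most care.
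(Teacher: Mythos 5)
Your reduction in the first paragraph and the rank-one construction in the second are correct, and they dispose of every case except the one you isolate: $\operatorname{rank} T=1$ with $x_0$ a symmetric point and $Tx_0\perp_B x_0$. But that remaining case is precisely where your proposal stops being a proof. The operator $A$ with $Ay=R\xi$, $Ax_0=b$, $A=0$ on a complement, is only asserted to satisfy $M_A\subseteq H$ ``for $R$ large enough,'' and this assertion would in fact fail for a generic choice of $\xi$: for $w=\alpha y+v\in S_{\mathbb{X}}$ with $v\in N$ small, smoothness at $y$ together with $y\perp_B v$ gives $|\alpha|=1-o(\|v\|)$ (the deficit is \emph{sub-linear} in $\|v\|$), whereas $\|Aw\|=\|\alpha R\xi+Av\|=|\alpha|R+\operatorname{sign}(\alpha)\,g_{\xi}(Av)+o(\|v\|)$ picks up a \emph{first-order} gain $g_{\xi}(Av)$ unless $\xi\perp_B b$. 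So no fixed $R$ confines $M_A$ to $H$ unless $\xi$ is aligned with $b$, and even with that alignment one is left comparing two second-order quantities --- the estimate you yourself flag as ``the real work'' and do not carry out. There is also a mismatch in the case split: the exceptional set for your rank-one construction is ``$\operatorname{rank}T=1$ and $x_0$ \emph{left} symmetric'' (what you need is that $\ker T=\ker f_0$ contain some $y\not\perp_B x_0$), whereas your degenerate case uses \emph{right} symmetry of $x_0$ to pass from $b\perp_B x_0$ to $x_0\perp_B b$ and hence $b\in H$; the configuration ``rank one, $x_0$ left but not right symmetric, $b\perp_B x_0$'' falls through both halves of the argument.

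For comparison, the paper avoids large parameters entirely. It first shows the unique norm-attaining direction $x$ must be left symmetric (via the rank-one operator $A(\alpha y+h)=\alpha Tx$ supported at a witness $y$ of non-symmetry, together with Lemma 2.1), and then perturbs $x$ to $z=x+h_0$ with $h_0\in H_x$ and $\|Th_0\|>\|T\|$, taking the rank-one operator $A(\alpha z'+h')=\alpha(dTz'+Th_0)$ with $d$ chosen by James's theorem so that $Az'\perp_B Tz'$; the contradiction then comes from left symmetry of $x$ and additivity of orthogonality at smooth points, with no second-order analysis. (That construction implicitly requires $T(H_x)\neq 0$, i.e.\ $\operatorname{rank}T\geq 2$, so your instinct that the rank-one symmetric configuration is the delicate one is sound --- but your proposal does not close it either.) To repair your write-up you must either carry out the convexity/smoothness estimate in full after imposing $\xi\perp_B b$, and fix the left/right symmetry slip, or replace the large-$R$ device by an aligned rank-one construction in the spirit of the one used in the proof of Theorem \ref{two}.
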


\begin{proof}
If possible suppose that $T$ is a right symmetric operator on $\mathbb{X} $ and $ T $ is a smooth point in $ B(\mathbb{X}). $ We first note that since $ T $ is smooth, it follows from Theorem $ 4.2 $ of \cite{P} that $ M_T=\{\pm x\}, $ for some $ x \in S_{\mathbb{X}}. $ We claim that $x$ is left symmetric.\\
If possible, suppose that $x$ is not left symmetric. Then there exists $y$ such that $x \perp_B y$ but $y \not\perp_B x$. Let $H_y$ be the hyperplane of codimension one such that $y \perp_B H_y$. Clearly, any $z \in \mathbb{X}$ can be written as $z = \alpha y + h$ for some $h \in H_y$.\\
Define a linear operator $A$ on  $\mathbb{X}$ such that $A(\alpha y +h) = \alpha Tx$.\\
It is easy to show that $M_A = \pm \{y\}$. Clearly, $A \perp_B T$, since $Ay \perp_B Ty$. \\
Since $y \not\perp_B x$, by Proposition $2.1$ of Sain \cite{Sa} we have $Ay \not\perp_B Ax$, i.e., $Tx \not\perp_B Ax$. Since $ M_T=\{\pm x\}, $ it follows from Theorem $ 2.1 $ of \cite{SP}, that $T \not\perp_B A, $ which contradicts that $ T $ is right symmetric. Therefore we must have that $x$ is left symmetric.\\
Let $H_x$ be the hyperplane of codimension one such that $x\perp_B H_x$. Since $x$ is left symmetric, $H_x \perp_B x$.
Consider the point $z = x+h_0$, where $h_0 \in H_x $ such that $\|Th_0\| > \|T\|$. Take $ z' = \frac{z}{\|z\|}$. Since $\mathbb{X}$ is strictly convex, by Theorem $4.3$ of James \cite{J}, Birkhoff-James orthogonality is left unique. Since $h_0 \perp_B x$, $z \not\perp_B x$.\\
Now, there exists a scalar $d$ such that $(dTz'+Th_0) \perp_B Tz'$.\\
We next claim that $d \neq 0$.\\
If $d=0$, then $Th_0 \perp_B Tz'$, from which it follows that $Th_0 \perp_B (Tx+Th_0)$.\\
But 
\[ \|Th_0-(Tx+Th_0)\| = \|Tx\| = \|T\| < \|Th_0\|, \]
which contradicts that $Th_0 \perp_B (Tx+Th_0)$. So $d \neq 0$.\\
Let $H_z$ be the hyperplane of codimension one such that $z' \perp_B H_z$. Then any $w \in \mathbb{X}$ can be written as $w = \alpha z' + h'$ for some $h' \in H_z$.\\
Define a linear operator $A$ on  $\mathbb{X}$ such that 
\[A(\alpha z' + h') =\alpha (dTz' + Th_0). \]
It is easy to show that $M_A = \pm \{z'\}$. Clearly, $A \perp_B T$, since $Az' \perp_B Tz'$. \\
We prove that $T \not\perp_B A$.\\
If $T \perp_B A$, using Theorem $2.1$ of Sain and Paul \cite{SP} it follows that $Tx \perp_B Ax $.\\
Now $ x = \alpha z' + h'$ for some $h' \in H_z$. It is easy to check that $\alpha \neq 0$.\\
So $Ax = \alpha Az' = \alpha (dTz' + Th_0)$.\\
As $Tx \perp_B (dTz'+Th_0)$ and $x \in M_T$ , by Proposition $2.1$ of \cite{Sa}, we have, $x\perp_B (dz'+h_0)$. Now $x \perp_B h_0$ and $\mathbb{X}$ is smooth. So $x \perp_B z'$. Also since $x$ is left symmetric and Birkhoff-James orthogonality is homogeneous, we have $z \perp_B x$, which contradicts that $z \not\perp_B x$.\\
Hence $T \not\perp_B A$. This proves that $T$ is not a right symmetric operator.
\end{proof}

\noindent When $ \mathbb{X} $ is not necessarily strictly convex or smooth, we have the following two theorems regarding right symmetric operators.
\begin{theorem}
Let $ \mathbb{X} $ be an $ n- $dimensional Banach space. Let $ x_0 \in S_{\mathbb{X}} $ be a left symmetric point. Let $ T \in B(\mathbb{X})  $ be such that $ M_T = \{ \pm x_0 \} $ and $ x_0 $ is an eigen vector of $ T. $ Then either of the following is true: \\
\noindent (i) $ rank~ T \geq n-1. $ \\
\noindent (ii) $ T $ is not a right symmetric point in $ B(\mathbb{X}). $
\end{theorem}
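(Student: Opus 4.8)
The plan is to prove the contrapositive: assuming (i) fails, so that $\mathrm{rank}\,T<n$, I will exhibit a single operator witnessing that $T$ is not right symmetric — namely the identity operator $I\in B(\mathbb X)$. Concretely, I aim to show $I\perp_B T$ while $T\not\perp_B I$, which is exactly the failure of right symmetry of $T$.

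First I would clear away trivial cases. We may assume $n\ge 2$ (otherwise $\mathrm{rank}\,T\ge n-1=0$ and (i) holds), and then the eigenvalue $\lambda$ of $T$ attached to $x_0$ is nonzero: if $Tx_0=0$, then $\|T\|=\|Tx_0\|=0$ forces $T=0$ and hence $M_T=S_{\mathbb X}$, contradicting $M_T=\{\pm x_0\}$. So $Tx_0=\lambda x_0$ with $\lambda\neq 0$. Next, for $I\perp_B T$: since $\mathrm{rank}\,T<n$, $T$ is not injective, so there is a unit vector $k\in\ker T$; then for every $\mu\in\mathbb R$ we have $\|(I+\mu T)k\|=\|k+\mu Tk\|=\|k\|=1=\|I\|$, whence $\|I+\mu T\|\ge\|I\|$, i.e. $I\perp_B T$. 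For $T\not\perp_B I$: observe that $x_0\not\perp_B x_0$ (e.g. $\|x_0-\tfrac12 x_0\|=\tfrac12<1=\|x_0\|$), and by homogeneity of Birkhoff--James orthogonality in the first argument $Tx_0=\lambda x_0\not\perp_B x_0=Ix_0$. Since $M_T=\{\pm x_0\}$, Theorem $2.1$ of \cite{SP} gives $T\perp_B I\iff Tx_0\perp_B Ix_0$; as the right-hand side fails, $T\not\perp_B I$. Combining the last two steps, $A:=I$ satisfies $A\perp_B T$ and $T\not\perp_B A$, so $T$ is not a right symmetric point of $B(\mathbb X)$, which is conclusion (ii).

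The only step with genuine content is $T\not\perp_B I$, and I expect that to be the main (though modest) obstacle: here the two hypotheses $M_T=\{\pm x_0\}$ and ``$x_0$ is an eigenvector of $T$'' are exactly what is needed to collapse the operator-level non-orthogonality to the elementary scalar fact $x_0\not\perp_B x_0$, via Theorem $2.1$ of \cite{SP}. Note that this route does not use the left symmetry of $x_0$, nor does it need the stronger bound $\mathrm{rank}\,T\le n-2$ (it only uses $\mathrm{rank}\,T<n$). If one instead wished to avoid invoking Theorem $2.1$ of \cite{SP}, the alternative — and considerably more technical — approach would be to build $A$ by hand as an operator attaining its norm only along a kernel direction: pick a hyperplane $H$ with $x_0\perp_B H$, use left symmetry of $x_0$ to get $H\perp_B x_0$, use $\mathrm{rank}\,T\le n-2$ to get a nonzero $h_0\in\ker T\cap H$, and set $Ax_0=\tfrac12 x_0$, $A|_H=\iota$ (the inclusion $H\hookrightarrow\mathbb X$); the decomposition $a\alpha x_0+h=a(\alpha x_0+h)+(1-a)h$ together with $\|h\|\le 1$ on the unit sphere (from $H\perp_B x_0$) then yields $\|A\|=\|Ah_0\|=1$, so $h_0\in M_A$, giving $A\perp_B T$ by Theorem $2.2$ of \cite{S} (since $Th_0=0$), while $x_0\not\perp_B Ax_0=\tfrac12 x_0$ gives $T\not\perp_B A$ by Theorem $2.1$ of \cite{SP}.
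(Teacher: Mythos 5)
Your proof is correct, and it takes a genuinely different and leaner route than the paper's. The paper, after reducing to $\dim\ker T\ge 2$, picks a unit vector $u_0\in\ker T\cap H_0$ (where $x_0\perp_B H_0$), invokes the left symmetry of $x_0$ to get $u_0\perp_B x_0$, and then hand-builds an operator $A$ (namely $Au_0=u_0$, $A=\tfrac12\,\mathrm{id}$ on a complementary hyperplane left invariant in the sense that $u_0\perp_B H_1$) with $u_0\in M_A$, so that $A\perp_B T$ via $Tu_0=0$ while $T\not\perp_B A$ via $Tx_0=\lambda_0x_0\not\perp_B\tfrac12 x_0$ and Theorem $2.1$ of \cite{SP}. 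You instead take $A=I$: any unit kernel vector gives $I\perp_B T$, and the same appeal to Theorem $2.1$ of \cite{SP} plus $\lambda_0x_0\not\perp_B x_0$ gives $T\not\perp_B I$. Your argument is essentially the two halves of the paper's proof of the \emph{next} theorem (Theorem $2.5$) spliced together, and it buys a formally stronger statement: it never uses the left symmetry of $x_0$, and it needs only $\ker T\neq\{0\}$ rather than $\dim\ker T\ge 2$, so conclusion (i) could be upgraded to ``$T$ is invertible.'' The only stylistic quibble is your opening phrase ``assuming (i) fails, so that $\mathrm{rank}\,T<n$'': the negation of (i) is the stronger $\mathrm{rank}\,T\le n-2$, but since you only consume the weaker consequence (and say so explicitly), the logic is unaffected. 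Your fallback construction at the end is also sound and is the one closest in spirit to what the paper actually does.
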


\begin{proof} We first note that the theorem is trivially true if $ n \leq 2. $ Let $ n > 2. $ Since $ x_0 $ is an eigen vector of $ T, $ there exists a scalar $ \lambda_0 $ such that $ Tx_0 = \lambda_0 x_0. $ We also note that since $ M_T = \{ \pm x_0 \}, \lambda_0 \neq 0. $ If  $ rank~ T $ $ \geq n-1 $ then we are done. Let $ rank~ T < n-1. $ Then $ ker~ T $ is a subspace of $ \mathbb{X} $ of dimension at least $ 2. $ Let $ x_0 \perp_B H_0, $ where $ H_0 $ is a hyperplane of codimension $ 1 $ in $ \mathbb{X}. $ Since $ dim~ker~T \geq 2, $ there exists a unit vector $ u_0 \in S_{\mathbb{X}} $ such that $ u_0 \in H_0 \cap ker~T. $ Since $ x_0 $ is a left symmetric point and $ x_0 \perp_B u_0, $ we have $ u_0 \perp_B x_0. $ There exists a hyperplane $ H_1 $ of codimension $ 1 $ in $ \mathbb{X} $ such that $ u_0 \perp_B H_1 $ and $ x_0 \in H_1. $ Let $ \{ x_0, y_i : i = 1, 2, \ldots, n-2\} $ be a  basis of $ H_1. $ Then $ \{ u_0, x_0, y_i : i = 1, 2, \ldots, n-2 \} $ is  basis of $ \mathbb{X} $ such that $ u_0 \perp_B span\{ x_0, y_i : i = 1, 2, \ldots, n-2 \}. $ Define a linear operator $ A \in B(\mathbb{X}) $ as follows: \\
\[  Au_0 = u_0, Ax_0 = \frac{1}{2} x_0, Ay_i = \frac{1}{2} y_i. \]

It is routine to check that $ u_0 \in M_A  $.  Since $ Au_0 \perp_B Tu_0 ,$ 
$ A \perp_B T. $ However, since $ \lambda_0 \neq 0, Tx_0 = \lambda_0 x_0 \not\perp_B \frac{1}{2} x_0 = Ax_0. $ This, coupled with the fact that $ M_T = \{ \pm x_0 \}, $ implies that $ T \not\perp_B A $ and thus $ T $ is not a right symmetric point in $ B(\mathbb{X}). $
\end{proof}

\begin{theorem}
Let $ \mathbb{X} $ be an $ n- $dimensional Banach space. Let $ T \in B(\mathbb{X}) $ be such that $ M_T = \{ \pm x_0 \} $ and $ ker~T $ contains a non-zero left symmetric point. Then either of the following is true: \\
\noindent (i) $ I \perp_B T $ and $ T \perp_B I, $ where $ I \in B(\mathbb{X}) $ is the identity operator on $ \mathbb{X}. $ \\
\noindent (ii) $ T $ is not a right symmetric point in $ B(\mathbb{X}). $
\end{theorem}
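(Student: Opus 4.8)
The plan is to observe that one half of alternative (i), namely $I \perp_B T$, is automatic in this setting, and that this essentially collapses the dichotomy. Let $u_0$ be a non-zero left symmetric point contained in $\ker T$; replacing $u_0$ by $u_0/\|u_0\|$ we may assume $\|u_0\| = 1$. Since $Tu_0 = 0$, for every $\lambda \in \mathbb{R}$ we have $(I + \lambda T)u_0 = u_0$, and hence
\[ \|I + \lambda T\| \geq \|(I + \lambda T)u_0\| = \|u_0\| = 1 = \|I\|. \]
As this holds for all $\lambda$, we get $I \perp_B T$. Note that only $\ker T \neq \{0\}$ is used in this step.

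Next I would split into two cases according to whether $T \perp_B I$. If $T \perp_B I$, then, together with the previous step, conclusion (i) holds and we are done. If instead $T \not\perp_B I$, then the operator $A := I$ satisfies $A \perp_B T$ but $T \not\perp_B A$; by the very definition of a right symmetric point, this shows that $T$ is \emph{not} a right symmetric point in $B(\mathbb{X})$, which is conclusion (ii). Thus in every case at least one of (i), (ii) holds, which proves the theorem.

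I do not foresee a real obstacle; the only point one needs to notice is that $I$ can serve simultaneously as the test operator in the definition of right symmetry and as the witness of $I \perp_B T$. If instead one prefers a constructive argument that genuinely uses $M_T = \{\pm x_0\}$ and the left symmetry of $u_0$, in the spirit of the proofs of the earlier theorems, one could argue by contradiction: assume $T$ is right symmetric with $T \not\perp_B I$, pick a hyperplane $H$ of codimension one with $u_0 \perp_B H$, define a suitable $A$ on $\mathbb{X} = \mathbb{R}u_0 \oplus H$ so that $M_A = \{\pm u_0\}$, transport the pertinent orthogonality relations using left symmetry of $u_0$, and conclude via Theorem $2.1$ of \cite{SP} together with Proposition $2.1$ of \cite{Sa}; in that route the delicate step is ensuring $M_A = \{\pm u_0\}$ without assuming $\mathbb{X}$ strictly convex. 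The direct argument above is, however, cleaner, and is the one I would present.
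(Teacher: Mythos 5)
Your proof is correct, and it takes a genuinely different --- and substantially shorter --- route than the paper's. Both arguments open identically: $I \perp_B T$ follows from $\|(I+\lambda T)u_0\| = \|u_0\| = 1 = \|I\|$ for the unit vector $u_0 \in \ker T$. But where you then observe that $I$ itself already witnesses the failure of right symmetry whenever $T \not\perp_B I$ (since $I \perp_B T$ holds unconditionally, the pair $(I,T)$ violates the defining implication), the paper instead takes a constructive detour: assuming $T \not\perp_B I$, it deduces $Tx_0 \not\perp_B x_0$ from $M_T = \{\pm x_0\}$, uses the left symmetry of $u_0$ together with $M_T = \{\pm x_0\}$ to show that $x_0$ lies in the hyperplane $H_0$ with $u_0 \perp_B H_0$, and then builds an operator $A$ with $Au_0 = u_0$, $Ax_0 = \frac{1}{2}x_0$, $Ay_i = \frac{1}{2}y_i$ on a basis $\{u_0, x_0, y_i\}$ adapted to $H_0$, for which $u_0 \in M_A$, $A \perp_B T$, and $T \not\perp_B A$. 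Your route buys simplicity and, in fact, more generality: it uses only $\ker T \neq \{0\}$, so the hypotheses $M_T = \{\pm x_0\}$ and the left symmetry of the kernel element are superfluous for the dichotomy as stated. What the paper's longer route buys is an explicit nontrivial witness $A \neq I$, norm-attaining at $u_0$, in the constructive style of the preceding theorems, together with the intermediate geometric fact $u_0 \perp_B x_0$; note also that the paper only needs $u_0 \in M_A$ (not $M_A = \{\pm u_0\}$) to conclude $A \perp_B T$, which defuses the one delicate point you flag in your alternative sketch. For the statement actually being proved, your observation that the test operator and the orthogonality witness can both be taken to be $I$ is airtight.
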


\begin{proof} Let $ u_0 \in ker~T $ be a non-zero left symmetric point. Without loss of generality let us assume that $ \| u_0 \| = 1. $ We have, $ \| I + \lambda T \| \geq \| (I + \lambda T) u_0 \| = 1 \geq \| I \|, $ which proves that $ I \perp_B T. $ If $ T \perp_B I $ then we are done. If possible suppose that $ T \not\perp_B I. $ Since $ M_T = \{ \pm x_0 \}, $ it follows that $ Tx_0 \not\perp_B Ix_0 =x_0. $ 
Let $ H_0 $ be a hyperplane of codimension $ 1 $ in $ \mathbb{X} $ such that $ u_0 \perp_B H_0. $ Let $ \{u_1, u_2, \ldots, u_{n-1}\} $ be a  basis of $ H_0. $ Then $ \{u_0, u_1, \ldots, u_{n-1}\} $ is a basis of $ \mathbb{X} $ such that $ u_0 \perp_B span\{ u_1, u_2, \ldots, u_{n-1} \}. $ \\
\noindent Let $ x_0 = \alpha_0 u_0 + \alpha_1 u_1 + \ldots + \alpha_{n-1} u_{n-1}, $ for some scalars $ \alpha_0, \alpha_1, \ldots, \alpha_{n-1}.  $ 
Clearly, we have $ u_0 \perp_B \alpha_1 u_1 + \ldots + \alpha_{n-1} u_{n-1}. $ Since $ u_0 $ is a left symmetric point in $ \mathbb{X}, $  $ \alpha_1 u_1 + \ldots + \alpha_{n-1} u_{n-1} \perp_B u_0.$ \\
We claim that $ \alpha_0 = 0. $ We have, \\
$ 1 = \| x_0 \| = \| \alpha_0 u_0 + (\alpha_1 u_1 + \ldots + \alpha_{n-1} u_{n-1}) \| = \|  (\alpha_1 u_1 + \ldots + \alpha_{n-1} u_{n-1}) + \alpha_0 u_0 \| \geq \| (\alpha_1 u_1 + \ldots + \alpha_{n-1} u_{n-1}) \|.$  We also have, $ \| T(\alpha_1 u_1 + \ldots + \alpha_{n-1} u_{n-1}) \| = \| T(\alpha_0 u_0 + \alpha_1 u_1 + \ldots + \alpha_{n-1} u_{n-1}) \| = \| Tx_0 \| = \| T \|$. This proves that $ (\alpha_1 u_1 + \ldots + \alpha_{n-1} u_{n-1}) \in M_T. $ Since $ M_T = \pm \{ x_0 \}, $ we must have  $x_0 =  (\alpha_1 u_1 + \ldots + \alpha_{n-1} u_{n-1})$ or $x_0 =  -(\alpha_1 u_1 + \ldots + \alpha_{n-1} u_{n-1}).$ Since $ x_0 = \alpha_0 u_0 + \alpha_1 u_1 + \ldots + \alpha_{n-1} u_{n-1} \in S_{\mathbb{X}}, $  this shows that $ x_0 =  (\alpha_1 u_1 + \ldots + \alpha_{n-1} u_{n-1}) $ and $ \alpha_0 = 0. $ \\
Thus we have, $ x_0 = \alpha_1 u_1 + \ldots + \alpha_{n-1} u_{n-1} $ and $ u_0 \perp_B x_0. $ 
Let $ \{ x_0, y_i : i = 3, 4, \ldots , n \} $ be a basis of $ H_0 $. Then $ \{ u_0, x_0, y_i : i = 3, 4, \ldots , n \} $ is a basis of $ \mathbb{X} $ such that $ u_0 \perp_B span\{ x_0, y_i : i = 3, 4, \ldots , n \} $. Define a linear operator $ A \in B(\mathbb{X}) $ as follows: 
\[ Au_0 = u_0, Ax_0 = \frac{1}{2} x_0, Ay_i = \frac{1}{2} y_i. \]
As before, it is easy to check that $ u_0 \in M_A . $ Clearly, $ A \perp_B T, $ since $ Au_0 \perp_B Tu_0 = 0. $ We also note that since $ M_T = \{ \pm x_0 \} $ and $ Tx_0 \not\perp_B Ax_0 = \frac{1}{2} x_0, $ we must have that $ T \not\perp_B A. $ This proves that $ T $ is not a right symmetric point in $ B(\mathbb{X}) $ and completes the proof of the theorem.
\end{proof}

In view of the results obtained in the present paper, we would like to end it with the remark that obtaining a characterization of right symmetric linear operators defined on a  finite-dimensional strictly convex and smooth Banach space, seems to be a very interesting problem. It should be noted that for complex Hilbert spaces, right symmetric bounded linear operators are characterized by isometries or coisometries \cite{T}.

\bibliographystyle{amsplain}

\end{document}